\documentclass[a4paper,12pt,oneside]{article}

\usepackage{hyperref}
\usepackage[T1]{fontenc}
\usepackage{authblk}
\usepackage{mathtools}
\usepackage{mathrsfs}
\usepackage{amssymb}
\usepackage{xifthen}
\usepackage{tikz}
\usepackage{amsthm}
\usepackage{latexsym}
\usepackage[all]{xy}
\usepackage{amscd,graphicx,amsfonts}
\usepackage{xcolor}
\usepackage{geometry}

\newtheorem{theorem}{Theorem}[section]

\newtheorem{lemma}[theorem]{Lemma}

\newtheorem{remark}[theorem]{Remark}
\newtheorem{defn}[theorem]{Definition}

\newgeometry{vmargin={28mm},hmargin={20mm,17mm}}

\hypersetup{
colorlinks,
linkcolor={orange},
citecolor={magenta},
urlcolor={blue}
}

\title{Unipotence of a double commutator with transvections}

\author{Ihechukwu Chinyere\thanks{Corresponding author:
\texttt{i.chinyere@up.ac.za; ihechukwu@aims.ac.za}}}

\affil{Department of Mathematics and Applied Mathematics, University of Pretoria,
Hatfield 0028, Pretoria, South Africa}

\begin{document}

\maketitle

\begin{abstract}
\noindent
Let $R$ be a nonzero commutative ring with identity, $n\geq3$, and $\sigma\in GL_n(R)$. We prove that there exist nonidentity transvections $\tau_1,\tau_2\in GL_n(R)$ such that $\big[[\sigma,\tau_1],\tau_2\big]=I_n$. Hence, Kourovka Notebook Problem 10.46 has a stronger affirmative answer over commutative rings with identity. The proof uses rank-one operators and the existence of a nonzero functional annihilating a standard basis vector and its image under $\sigma$.
\end{abstract}

\textbf{Keywords:} transvections; unipotent matrices; commutators; rank-one operators; general linear groups.

\section{Introduction}

Let $R$ be a nonzero commutative ring with identity, let $n\geq3$, and let $V=R^n$. We write $GL_n(R)$ for the group of invertible $n\times n$ matrices over $R$. For $g,h\in GL_n(R)$, we use the commutator convention $[g,h]=ghg^{-1}h^{-1}$.

\medskip

We use the term \emph{transvection} in the rank-one sense, as made precise in Definition~\ref{def:transvection} below; the direction vectors used in the construction are unimodular, so the resulting transvections are of the usual rank-one type (see Remark~\ref{rem:unimodular}).

\medskip

The problem considered here is a special case of Problem 10.46 in the \emph{Kourovka Notebook}, proposed by V.~M. Petechuk, which appears in the 21st issue of the Notebook (2026) \cite{Kourovka2026}. The problem asks whether, for every $\sigma\in GL_n(R)$ with $n\geq3$, there exist transvections $\tau_1,\tau_2$ such that $\big[[\sigma,\tau_1],\tau_2\big]$ is unipotent. We prove the stronger statement that, over every nonzero commutative ring with identity, the double commutator can in fact be chosen to equal $I_n$.

\medskip

The problem is known to fail in general once commutativity is dropped: Muliarchyk \cite{Muliarchyk} showed that over arbitrary associative rings with identity the answer is negative, by constructing a noncommutative algebra and an invertible matrix for which every nonempty iterated commutator with nonidentity unipotents is nonunipotent and of infinite order. Over fields, however, the answer is positive, and his Proposition~5.1 shows the double commutator can always be taken to be the identity, via a rank-one operator argument. The present paper adapts that argument from fields to arbitrary nonzero commutative rings with identity, showing that the identity double commutator persists in this broader setting, including rings with zero divisors.

\medskip

A related but more restrictive line of work is due to Petechuk and Petechuk \cite{Petechuk2020}, who studied such commutators over division rings in terms of residual and fixed submodules. By contrast, the construction given here works over any nonzero commutative ring with identity and produces, for every $\sigma$, a second nonidentity transvection for which the iterated commutator is exactly $I_n$. The key point making this possible for $n\geq3$ is the existence of a nonzero functional annihilating a prescribed standard basis vector together with one further vector.

\medskip

The commutativity assumption enters explicitly through the construction of such a functional: if $j,k$ are distinct indices, then $w_kw_j-w_jw_k=0$ for $w_j,w_k\in R$. Hence the present argument is specifically adapted to commutative coefficient rings.

\section{Main result}

We now set up the notation and rank-one machinery needed to state and prove our main theorem. The argument proceeds in three steps: we first record some elementary identities for rank-one operators (Lemma~\ref{lem:rankone}), then use commutativity of $R$ together with the hypothesis $n\geq3$ to produce a functional vanishing on a prescribed pair of vectors (Lemma~\ref{lem:functional}), and finally combine these ingredients to construct the two transvections $\tau_1,\tau_2$ asserted in Theorem~\ref{main}.

\medskip

Throughout, let $V=R^n$ and $V^*=\operatorname{Hom}_R(V,R)$. We write $e_1,\ldots,e_n$ for the standard basis of $V$, and $e_1^*,\ldots,e_n^*\in V^*$ for the corresponding dual basis, determined by $e_j^*(e_\ell)=\delta_{j\ell}$ for $1\leq j,\ell\leq n$.

\begin{defn}\label{def:rankone}
For $u\in V$ and $f\in V^*$, define the operator $u\otimes f\in\operatorname{End}_R(V)$ by $(u\otimes f)(x)=f(x)u$ for $x\in V$.
\end{defn}

The following elementary identities will be used repeatedly.

\begin{lemma}\label{lem:rankone}
Let $u,v\in V$, $f,g\in V^*$, and $\sigma\in GL_n(R)$. Then:
\begin{enumerate}
\item[\textup{(i)}] $(u\otimes f)^2=f(u)(u\otimes f)$;
\item[\textup{(ii)}] $(u\otimes f)(v\otimes g)=f(v)(u\otimes g)$;
\item[\textup{(iii)}] $\sigma^{-1}(u\otimes f)\sigma=(\sigma^{-1}u)\otimes(f\circ\sigma)$;
\item[\textup{(iv)}] $\sigma(u\otimes f)\sigma^{-1}=(\sigma u)\otimes(f\circ\sigma^{-1})$.
\end{enumerate}
\end{lemma}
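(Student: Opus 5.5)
The plan is to verify each identity by evaluating both sides on an arbitrary vector $x\in V$. Two operators in $\operatorname{End}_R(V)$ are equal exactly when they agree at every $x$, so this suffices. The only facts needed are $R$-linearity of $f,g,\sigma$ and the definition $(u\otimes f)(x)=f(x)u$.

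For (ii), compute $(u\otimes f)\big((v\otimes g)(x)\big)=(u\otimes f)(g(x)v)=g(x)f(v)u$. Here we use that $f$ is $R$-linear and that scalars commute in $R$. This equals $f(v)\,(u\otimes g)(x)$. Identity (i) is then the special case $v=u$, $g=f$ of (ii).

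For (iii), compute $\sigma^{-1}(u\otimes f)\sigma(x)=\sigma^{-1}\big(f(\sigma x)u\big)=f(\sigma x)\,\sigma^{-1}u$, using $R$-linearity of $\sigma^{-1}$. By definition this is $\big((\sigma^{-1}u)\otimes(f\circ\sigma)\big)(x)$, and $f\circ\sigma\in V^*$ because it is a composition of $R$-linear maps. Identity (iv) follows in the same way with $\sigma$ and $\sigma^{-1}$ interchanged, or equivalently by applying (iii) to $\sigma^{-1}\in GL_n(R)$.

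None of these steps is a real obstacle. The only point needing care is (ii), where $f(v)$ must be pulled out as a scalar and commutativity of $R$ is used to reorder $g(x)f(v)$. Beyond that, one just keeps track of which side $\sigma$ acts on when forming $f\circ\sigma$ versus $f\circ\sigma^{-1}$.
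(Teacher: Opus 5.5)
Your proof is correct and follows essentially the same route as the paper: both verify each identity by evaluating both sides at an arbitrary $x\in V$, using only $R$-linearity and commutativity of $R$. The differences are cosmetic. You derive (i) as the special case $v=u$, $g=f$ of (ii), and you prove (iii) directly and then obtain (iv) by applying it to $\sigma^{-1}$, whereas the paper computes (i) separately and proves (iv) first.
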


\begin{proof}
For (i), $(u\otimes f)^2(x)=f\big(f(x)u\big)u=f(x)f(u)u=f(u)(u\otimes f)(x)$ for all $x\in V$. For (ii), $(u\otimes f)(v\otimes g)(x)=f\big(g(x)v\big)u=g(x)f(v)u=f(v)(u\otimes g)(x)$ for all $x\in V$. For (iv), $\sigma(u\otimes f)\sigma^{-1}(x)=\sigma\big(f(\sigma^{-1}x)u\big)=f(\sigma^{-1}x)(\sigma u)=(f\circ\sigma^{-1})(x)(\sigma u)=\big((\sigma u)\otimes(f\circ\sigma^{-1})\big)(x)$ for all $x\in V$; the proof of (iii) is identical with $\sigma$ replaced by $\sigma^{-1}$.
\end{proof}

In the proof of Theorem~\ref{main} below, parts (i) and (iv) of Lemma~\ref{lem:rankone} are used directly, while part (ii) is used in the computation of the products $NM$ and $MN$. Part (iii) is not needed in the sequel and is recorded only for completeness.

\begin{defn}\label{def:transvection}
An element $\tau\in GL_n(R)$ is called a transvection if $\tau=I_n+u\otimes f$ for some $u\in V$ and $f\in V^*$ satisfying $f(u)=0$ and $u\otimes f\neq0$.
\end{defn}
\begin{remark}\label{rem:unimodular}
If $u\in V$ is unimodular (i.e., there exists $\phi\in V^*$ with $\phi(u)=1$), then $u\otimes f\neq0$ whenever $f\neq0$: choosing $x\in V$ with $f(x)\neq0$ and applying $\phi$ to $(u\otimes f)(x)=f(x)u$ gives $\phi\big((u\otimes f)(x)\big)=f(x)\neq0$. Hence, when $MN\neq0$, the transvection $\tau_2=I_n+(\sigma v)\otimes g$ constructed in the proof has the usual rank-one form, since $\sigma v$ is unimodular.
\end{remark}

\begin{lemma}\label{lem:standardbasis}
If $0\neq f\in V^*$, then $e_i\otimes f\neq0$ for every $1\leq i\leq n$.
\end{lemma}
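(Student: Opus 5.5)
This follows from Remark~\ref{rem:unimodular}, since each standard basis vector $e_i$ is unimodular. Concretely, I will take the dual basis functional $e_i^*\in V^*$ as the witness $\phi$, because $e_i^*(e_i)=1$.

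The plan is a proof by contradiction via evaluation. Since $f\neq0$, there is some $x\in V$ with $f(x)\neq0$; for instance, some $f(e_\ell)\neq0$, because $f$ is determined by its values on the standard basis. Next I compute
\[
e_i^*\bigl((e_i\otimes f)(x)\bigr)=e_i^*\bigl(f(x)e_i\bigr)=f(x)\,e_i^*(e_i)=f(x)\neq0.
\]
This uses only Definition~\ref{def:rankone} and the $R$-linearity of $e_i^*$. Hence $(e_i\otimes f)(x)\neq0$, and so $e_i\otimes f\neq0$.

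An equivalent direct route is to note that $(e_i\otimes f)(e_\ell)=f(e_\ell)e_i$, which is the vector whose $i$-th coordinate is $f(e_\ell)$. So the matrix of $e_i\otimes f$ has $i$-th row $(f(e_1),\dots,f(e_n))$, and this row is nonzero.

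No step here is a real obstacle. The only point needing care is to avoid any cancellation argument, since $R$ may have zero divisors. Evaluating against $e_i^*$ sidesteps this entirely, because it recovers the scalar $f(x)$ exactly rather than a multiple of it.
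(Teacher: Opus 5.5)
Your proof is correct and is essentially the paper's argument: pick $x$ with $f(x)\neq0$ and observe that $(e_i\otimes f)(x)=f(x)e_i\neq0$. The paper treats the nonvanishing of $f(x)e_i$ as evident. You justify it by applying $e_i^*$, which is exactly the unimodularity argument of Remark~\ref{rem:unimodular} and a sensible extra step given that $R$ may have zero divisors.
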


\begin{proof}
Choose $x\in V$ such that $f(x)\neq0$. Then $(e_i\otimes f)(x)=f(x)e_i\neq0$.
\end{proof}

\begin{defn}\label{def:unipotent}
An element $g\in GL_n(R)$ is called \emph{unipotent} if $g-I_n$ is nilpotent.
\end{defn}

The next lemma is the point at which both commutativity of $R$ and the assumption $n\geq3$ enter the construction.

\begin{lemma}\label{lem:functional}
Let $R$ be a nonzero commutative ring with identity and let $n\geq3$. Fix a standard basis vector $v=e_i$ and let $w\in R^n$. Then there exists a nonzero $f\in V^*$ such that $f(v)=f(w)=0$.
\end{lemma}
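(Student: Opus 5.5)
The plan is to write the functional down explicitly. Since $n\geq3$, we can pick two distinct indices $j,k\in\{1,\dots,n\}$, both different from $i$. Write $w=\sum_{\ell}w_\ell e_\ell$ with $w_\ell\in R$. The candidate functional is
\[
f = w_k e_j^* - w_j e_k^*.
\]
It lies in $V^*$, and we then have three things to check: $f(v)=0$, $f(w)=0$, and $f\neq 0$.

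The first check is immediate. Because $j,k\neq i$, we get $f(e_i)=w_k\delta_{ji}-w_j\delta_{ki}=0$. For the second, evaluate $f(w)=w_kw_j-w_jw_k$. This is exactly where commutativity of $R$ enters, as anticipated in the introduction: the expression vanishes because $R$ is commutative.

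Nonvanishing of $f$ is the only real obstacle, since $w_j$ and $w_k$ may both be zero. This forces a case split.
\begin{itemize}
\item If $w_k\neq0$ or $w_j\neq0$, then $f(e_j)=w_k$ and $f(e_k)=-w_j$, so at least one of these values is nonzero. Hence $f\neq0$.
\item If $w_j=w_k=0$, the formula above gives $f=0$, so we discard it. Instead we take $f=e_j^*$. Then $f(e_i)=0$ because $j\neq i$, and $f(w)=w_j=0$. Also $f(e_j)=1\neq0$, which uses that $R$ is a nonzero ring.
\end{itemize}

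In both cases $f$ is a nonzero functional with $f(v)=f(w)=0$. This completes the argument. The hypothesis $n\geq3$ is used only to guarantee two indices distinct from $i$.
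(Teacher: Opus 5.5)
Your proposal is correct and follows the paper's proof essentially line for line: the same functional $f=w_ke_j^*-w_je_k^*$, the same use of commutativity to get $f(w)=0$, and the same fallback $f=e_j^*$ when $w_j=w_k=0$. Your explicit remark that $e_j^*\neq0$ uses that $R$ is nonzero is a small point the paper leaves implicit.
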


\begin{proof}
Choose distinct indices $j,k$ different from $i$. Write $w=\sum_{\ell=1}^n w_\ell e_\ell$. If at least one of $w_j,w_k$ is nonzero, define $f=w_k e_j^*-w_j e_k^*$. Then $f(v)=0$ and, by commutativity of $R$, $f(w)=w_kw_j-w_jw_k=0$. Moreover $f(e_j)=w_k$ and $f(e_k)=-w_j$, and since at least one of $w_j,w_k$ is nonzero, $f\neq0$. If $w_j=w_k=0$, simply take $f=e_j^*$. Again $f\neq0$ and $f(v)=f(w)=0$.
\end{proof}

\begin{remark}
The condition $n\geq3$ in Lemma~\ref{lem:functional} is needed for this particular construction of $f$, which requires two indices $j,k$ distinct from $i$. For $n=2$ the construction is vacuous: e.g. with $R=\mathbb Z$, $v=e_1$, and $w=e_2$, any $f=ae_1^*+be_2^*$ satisfying $f(v)=f(w)=0$ must have $a=b=0$, so no nonzero functional annihilates both vectors. This shows only that the present method of proof breaks down at $n=2$; it does not by itself imply that the conclusion of Theorem~\ref{main} fails for $n=2$.
\end{remark}

\begin{theorem}\label{main}
Let $R$ be a nonzero commutative ring with identity, let $n\geq3$, and let $\sigma\in GL_n(R)$. Then there exist nonidentity transvections $\tau_1,\tau_2\in GL_n(R)$ such that $\big[[\sigma,\tau_1],\tau_2\big]=I_n$.
\end{theorem}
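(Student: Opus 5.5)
The plan is to build both transvections with the same direction vector $v=e_i$ (any fixed standard basis vector). Each one comes with a functional supplied by Lemma~\ref{lem:functional}, chosen so that the rank-one terms in the commutator calculation cancel.

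\emph{Step 1: choosing $\tau_1$.} Apply Lemma~\ref{lem:functional} with $v=e_i$ and $w=\sigma^{-1}v$. This gives a nonzero $f\in V^*$ with $f(v)=0$ and $f(\sigma^{-1}v)=0$. Put $\tau_1=I_n+v\otimes f$.
\begin{itemize}
\item By Lemma~\ref{lem:rankone}(i), $(v\otimes f)^2=f(v)(v\otimes f)=0$, so $\tau_1^{-1}=I_n-v\otimes f$.
\item By Lemma~\ref{lem:standardbasis}, $v\otimes f\neq0$, so $\tau_1$ is a nonidentity transvection in the sense of Definition~\ref{def:transvection}.
\end{itemize}

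\emph{Step 2: computing $[\sigma,\tau_1]$.} Set $N=v\otimes f$ and $M=\sigma N\sigma^{-1}=(\sigma v)\otimes(f\circ\sigma^{-1})$, using Lemma~\ref{lem:rankone}(iv). Then
\[
[\sigma,\tau_1]=(I_n+M)(I_n-N)=I_n+M-N-MN .
\]
By Lemma~\ref{lem:rankone}(ii), $MN=(f\circ\sigma^{-1})(v)\,(\sigma v)\otimes f$. This vanishes because $f(\sigma^{-1}v)=0$. Hence $c:=[\sigma,\tau_1]=I_n+M-N$.

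\emph{Step 3: choosing $\tau_2$.} It suffices to find a nonidentity transvection $\tau_2$ commuting with $c$, i.e.\ commuting with $M-N$. Try $\tau_2=I_n+v\otimes g$, where, by Lemma~\ref{lem:functional} applied with $w=\sigma v$, the functional $g\neq0$ satisfies $g(v)=0$ and $g(\sigma v)=0$. As in Step 1, this is a nonidentity transvection. Using Lemma~\ref{lem:rankone}(ii), the two products are
\begin{align*}
(v\otimes g)(M-N)&=g(\sigma v)\,v\otimes(f\circ\sigma^{-1})-g(v)\,v\otimes f=0,\\
(M-N)(v\otimes g)&=f(\sigma^{-1}v)\,(\sigma v)\otimes g-f(v)\,v\otimes g=0.
\end{align*}
So $v\otimes g$ commutes with $c$, hence so does $\tau_2$, and $\big[[\sigma,\tau_1],\tau_2\big]=I_n$.

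\emph{Main obstacle.} The only delicate point is choosing the functionals so that every cross term vanishes at once. The orthogonality conditions $f(\sigma^{-1}v)=0$ (which kills $MN$) and $g(\sigma v)=0$ are exactly the pairs of constraints that Lemma~\ref{lem:functional} handles; this is where $n\geq3$ and commutativity are used. I would double-check the ordering in each rank-one product carefully, since the ordering determines which of these conditions is needed. Nonvanishing of $u\otimes f$ is automatic here, because $u=e_i$ (Lemma~\ref{lem:standardbasis}).
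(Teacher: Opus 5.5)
Your proof is correct. It stays in the paper's rank-one framework, but it arranges the cancellations differently.

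\textbf{The paper's route.} The paper chooses $f$ with $f(v)=f(\sigma v)=0$. This kills $NM$ rather than $MN$. It then keeps the full commutator $A=I_n+M-N-MN$ and checks that $A$ commutes with $I_n+MN$, using $M^2=N^2=NM=(MN)^2=0$. When $MN\neq0$ it takes $\tau_2=I_n+MN=I_n+f(\sigma^{-1}v)\,(\sigma v)\otimes f$, a transvection with direction $\sigma v$. When $MN=0$ it falls back on $\tau_2=\tau_1$.

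\textbf{Your route.} You impose $f(\sigma^{-1}v)=0$ instead, so $MN=0$ and $[\sigma,\tau_1]=I_n+M-N$ from the outset. You then apply Lemma~\ref{lem:functional} a second time to get an independent $g$ with $g(v)=g(\sigma v)=0$. The four cross terms between $v\otimes g$ and $M-N$ vanish exactly as you compute. The second functional is genuinely needed in your setup. Since $NM=f(\sigma v)\,v\otimes(f\circ\sigma^{-1})$ need not vanish for your $f$, the paper's shortcut $\tau_2=\tau_1$ is not available to you.

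\textbf{What each buys.} The paper uses the functional lemma only once and builds $\tau_2$ canonically from $\sigma$ and $\tau_1$. The price is a case distinction: nonidentity-ness of $I_n+MN$ depends on the hypothesis $MN\neq0$, hence the fallback. Your version spends a second functional but has no case split. Both of your transvections have the standard basis vector $e_i$ as direction, so their being nonidentity transvections follows at once from Lemma~\ref{lem:standardbasis}. You never need the unimodularity of $\sigma v$ invoked via Remark~\ref{rem:unimodular}.
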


\begin{proof}
Choose a standard basis vector $v=e_i$ and put $w=\sigma v$. By Lemma~\ref{lem:functional}, there exists a nonzero $f\in V^*$ such that $f(v)=f(\sigma v)=0$. Set $N=v\otimes f$ and $\tau_1=I_n+N$. Since $f(v)=0$, Lemma~\ref{lem:rankone}(i) gives $N^2=0$, and hence $\tau_1^{-1}=I_n-N$. By Lemma~\ref{lem:standardbasis}, $N\neq0$, so $\tau_1$ is a nonidentity transvection.

\medskip

Now put $M=\sigma N\sigma^{-1}$. By Lemma~\ref{lem:rankone}(iv), $M=(\sigma v)\otimes(f\circ\sigma^{-1})$. Since $(f\circ\sigma^{-1})(\sigma v)=f(v)=0$, Lemma~\ref{lem:rankone}(i) gives $M^2=0$. Moreover, by Lemma~\ref{lem:rankone}(ii),
$$
NM=(v\otimes f)\big((\sigma v)\otimes(f\circ\sigma^{-1})\big)=f(\sigma v)\,\big(v\otimes(f\circ\sigma^{-1})\big)=0,
$$
since $f(\sigma v)=0$. Also by Lemma~\ref{lem:rankone}(ii),
$$
MN=\big((\sigma v)\otimes(f\circ\sigma^{-1})\big)(v\otimes f)=(f\circ\sigma^{-1})(v)\,\big((\sigma v)\otimes f\big)=f(\sigma^{-1}v)\,\big((\sigma v)\otimes f\big).
$$
Since $NM=0$, we have $(MN)^2=M(NM)N=0$.

\medskip

Let $A=[\sigma,\tau_1]$. Since $\tau_1^{-1}=I_n-N$, we obtain

$$
A=(I_n+M)(I_n-N)=I_n+M-N-MN.
$$

We claim that $A$ commutes with $I_n+MN$. Indeed, using $M^2=N^2=NM=(MN)^2=0$,
$$
(A-I_n)MN=(M-N-MN)MN=M^2N-NMN-(MN)^2=0,
$$
$$
MN(A-I_n)=MN(M-N-MN)=MNM-MN^2-(MN)^2=0.
$$
Consequently, $A(I_n+MN)=(I_n+MN)A$.

\medskip

Suppose first that $MN\neq0$. From the computation above, $MN=(\sigma v)\otimes g$, where $g=f(\sigma^{-1}v)f$, and $g(\sigma v)=f(\sigma^{-1}v)f(\sigma v)=0$. Set $\tau_2=I_n+MN=I_n+(\sigma v)\otimes g$. Since $MN\neq0$, we have $g\neq0$, and $g(\sigma v)=0$, so $\tau_2$ is a nonidentity transvection. Moreover, $\sigma v$ is unimodular, since $v=e_i$ is unimodular and $\sigma$ is invertible: if $\phi\in V^*$ satisfies $\phi(v)=1$, then $(\phi\circ\sigma^{-1})(\sigma v)=\phi(v)=1$. Hence, by Remark~\ref{rem:unimodular}, $\tau_2$ is of the usual rank-one type. Since $A$ commutes with $\tau_2$, we obtain $\big[[\sigma,\tau_1],\tau_2\big]=[A,\tau_2]=I_n$.

\medskip

It remains to consider the case $MN=0$. Then $A=I_n+M-N$. Since $M^2=N^2=MN=NM=0$, we have $AN=N=NA$. Hence $A$ commutes with $\tau_1=I_n+N$. Taking $\tau_2=\tau_1$ (so that, as noted in Remark~\ref{rem:unimodular}, the direction vector of $\tau_2$ is $v$ rather than $\sigma v$), we again obtain $\big[[\sigma,\tau_1],\tau_2\big]=[A,\tau_2]=I_n$.
\end{proof}

\textbf{Use of LLMs.}
ChatGPT and Claude assisted with editing and presentation. The mathematical arguments and their verification remain the responsibility of the author.

\end{document}